\title[K-theory for $\widetilde A_2$ groups]{Harmonic cochains and K-theory for $\widetilde A_2$ groups}
\author{Guyan Robertson}
\address{School of Mathematics and Statistics, University of Newcastle, NE1 7RU, U.K.}
\email{a.g.robertson@ncl.ac.uk}
\subjclass{Primary 46L80; secondary 58B34, 51E24, 20G25}
\keywords{Euclidean building, boundary, operator algebra}
\chardef\bslash=`\\
\def\verbatim{\interlinepenalty\@M \@verbatim
  \leftskip\@totalleftmargin\advance\leftskip2pc
  \frenchspacing\@vobeyspaces \@xverbatim}
\newtheorem{theorem}{Theorem}[section]
\newtheorem{lemma}[theorem]{Lemma}
\newtheorem{proposition}[theorem]{Proposition}
\theoremstyle{definition}
\newtheorem{remark}[theorem]{Remark}
\newcommand{\cl}[1]{{\mathcal{#1}}}
\newcommand{\bb}[1]{{\mathbb{#1}}}
\newcommand{\fk}[1]{{\mathfrak{#1}}}
\newcommand{\ang}[1]{{\langle{#1}\rangle}}
\newcommand{\ovl}{\overline}
\newcounter{picture}
\newcommand{\e}{{\varepsilon}}
\newcommand{\PGL}{{\text{\rm{PGL}}}}
\begin{document}

\begin{abstract}
If $\Gamma$ is a torsion free $\widetilde A_2$ group
acting on an $\widetilde A_2$ building $\Delta$,
and $\fk A_{\Gamma}$ is the associated boundary $C^*$-algebra,
it is proved that
$K_0(\fk A_\Gamma)\otimes \bb R \cong \bb R^{2\beta_2}$,
  where $\beta_2=\dim_\bb R H^2(\Gamma, \bb R)$.
\end{abstract}

\maketitle

\section{Introduction}
Let $\Gamma$ be an $\widetilde A_2$ group acting on an $\widetilde A_2$ building $\Delta$ of order $q$. The Furstenberg boundary $\Omega$ of $\Delta$ is the set of chambers of the spherical building at infinity, endowed with a natural compact totally disconnected topology.
The topological action of $\Gamma$ on $\Omega$ is encoded in the full crossed product $C^*$-algebra $\fk A_{\Gamma}=C(\Omega)\rtimes\Gamma$, which is studied in \cite{rs1,rs2,rs3}.
This full crossed product is isomorphic to the reduced crossed product, since the action of $\Gamma$
on $\Omega$ is amenable \cite[Section 4.2]{rs1}.
As the notation suggests, $\fk A_\Gamma$ depends only on $\Gamma$ \cite{rs3}.
Motivated by rigidity theorems of Mostow, Margulis and others, whose proofs rely on the study of boundary actions, it is of interest to determine the extent to which
the boundary $C^*$-algebra $\fk A_\Gamma$ determines the group $\Gamma$.

In \cite{rs3}, T. Steger and the author computed the K-theory of $\fk A_\Gamma$ for  many $\widetilde A_2$ groups with $q\le 13$. The computations were done for all the $\widetilde A_2$ groups in the cases $q=2,3$ and for several representative groups for each of the other values of $q\le 13$.
If $q=2$ there are precisely eight $\widetilde A_2$ groups $\Gamma$, all of which embed as lattices in $\PGL (3,\bb K)$, where $\bb K= \bb F_2((X))$ or $\bb K=\bb Q_2$. If $q=3$ there are 89 possible $\widetilde A_2$ groups, of which 65 are ``exotic'' in the sense that they do not embed naturally in linear groups.  Exotic $\widetilde A_n$ groups only exist if $n=2$, since all locally finite Euclidean buildings of dimension $\geq 3$ are associated to linear algebraic groups. This justifies, to some extent, the focus on $\widetilde A_2$ groups.

  For each $\widetilde A_2$ group $\Gamma$, the $C^*$-algebra $\fk A_{\Gamma}$ has the structure of a rank 2 Cuntz-Krieger algebra \cite[Theorem 7.7]{rs2}. These algebras are classified up to isomorphism by their $K$-groups \cite[Remark 6.5]{rs2} and it was proved in \cite[Theorem 2.1]{rs3} that
  \begin{equation}\label{Kremark}
  K_0(\fk A_{\Gamma})=K_1(\fk A_{\Gamma})=\bb Z^{2r}\oplus T,
  \end{equation}
  where $r\ge 0$ and $T$ is a finite group. The computations in \cite{rs3}
led to some striking observations. For example, the three torsion-free $\widetilde A_2$ subgroups of $\PGL_3(\bb Q_2)$ are distinguished from each other by $K_0(\fk A_{\Gamma})$.
    There was also evidence for the conjecture that, for any torsion free
  $\widetilde A_2$ group $\Gamma$,
  the integer $r$ in the equation (\ref{Kremark}) is equal to the second Betti number of $\Gamma$.
  The purpose of this article is to prove that this is indeed the case.

\begin{theorem}\label{main}
If $\Gamma$ is a torsion free $\widetilde A_2$ group
acting on an $\widetilde A_2$ building $\Delta$ of order $q$, then
\begin{equation}
K_0(\fk A_\Gamma)\otimes \bb R \cong \bb R^{2\beta_2},
\end{equation}
  where $\beta_2=\dim_\bb R H^2(\Gamma, \bb R)=\frac{1}{3}(q-2)(q^2+q+1)$.
\end{theorem}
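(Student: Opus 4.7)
The strategy is to combine the rank-$2$ Cuntz--Krieger structure of $\fk A_\Gamma$ with harmonic analysis on $\Delta$, using $\Gamma$-invariant harmonic cochains (as the title suggests) as the bridge between the $K$-theoretic data and the group cohomology $H^2(\Gamma,\bb R)$. The factor of $2$ in the statement will come from the two independent shift directions in the rank-$2$ structure.

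First I would invoke the Koszul-type chain complex of the two commuting operators $I-M_1^t,\,I-M_2^t$ on a free abelian group indexed by the vertex set of the underlying $2$-graph. This complex computes $K_*$ of any rank-$2$ Cuntz--Krieger algebra; tensoring with $\bb R$, it presents $K_0(\fk A_\Gamma)\otimes\bb R$ as a direct sum of cohomology groups of the complex, each of which can be identified concretely with locally constant $\Gamma$-equivariant functions on basic clopen cylinder sets in $\Omega$ that satisfy a balanced-sum relation under the two shift maps.

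Second, and this is the crux, I would transport these boundary harmonic cochains back into the building via a Poisson-type integration along sectors. Each such cochain $\varphi$ on $\Omega$ produces a $\Gamma$-invariant harmonic $2$-cochain on $\Delta$, i.e.\ a real-valued function on chambers whose signed sum over each local link relation vanishes. Using contractibility of $\Delta$, freeness of the $\Gamma$-action, and the apartment axioms of the $\widetilde A_2$ building, one inverts this transform rationally, and identifies the space of $\Gamma$-invariant harmonic $2$-cochains with $H^2(\Delta/\Gamma,\bb R)=H^2(\Gamma,\bb R)$. Each of the relevant Koszul summands contributes an isomorphic copy of this space, giving $\bb R^{2\beta_2}$.

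Finally, for the explicit value $\beta_2=\tfrac{1}{3}(q-2)(q^2+q+1)$, I would compute $\chi(\Delta/\Gamma)$ from the projective-plane link data (each vertex lies in $(q+1)(q^2+q+1)$ chambers, has $2(q^2+q+1)$ neighbours, and so on) and combine it with the vanishing of $\beta_1(\Gamma)$, read off from the triangle presentation. The main obstacle I foresee is the Poisson-transform step: boundary harmonic analysis on an $\widetilde A_2$ building is delicate (three sector-types and intricate projective-plane incidence combinatorics), and one must rule out hidden cancellations that would collapse the image below $H^2$.
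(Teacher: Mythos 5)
Your overall architecture matches the paper's: reduce to the rank-$2$ Cuntz--Krieger $K$-theory formula $K_0(\fk A_\Gamma)=\bb Z^{2r}\oplus T$ (this is exactly \cite[Theorem 2.1]{rs3}, quoted as Theorem \ref{Ki}), identify the relevant group rationally with $\Gamma$-invariant harmonic $2$-cochains, apply Garland's isomorphism to get $H^2(\Gamma,\bb R)$, and finish with an Euler characteristic count. But the crux of your argument --- the ``Poisson-type integration along sectors'' transporting boundary data to harmonic cochains and its rational inversion --- is precisely the step you do not carry out, and it is not how the paper proceeds. The paper never leaves the building: the integer $r$ is the rank of the explicit abelian group $C(\Gamma)$ presented by the transition-matrix relations (\ref{rels}), and the whole content of the proof is to show that, after tensoring with $\bb R$, these relations are \emph{equivalent} to the harmonicity relations (\ref{rel0}). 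One direction (harmonic relations imply the transition relations) is a local combinatorial computation in the link. The other direction is where the missing idea lies: a hexagon computation in apartments shows that $\ang{a_0}+\ang{a_1}+\ang{a_2}=\e$ and that $\e$ is torsion, whence $\xi\mapsto\ang{\xi}\otimes 1$ induces a homomorphism from $\Gamma$ into $C(\Gamma)\otimes\bb R$; Kazhdan's property (T) then forces its image to be finite, hence zero in the torsion-free group $C(\Gamma)\otimes\bb R$, which is exactly the vanishing $\ang{\xi}\otimes 1=0=\ang{\ovl\xi}\otimes 1$ needed for harmonicity. Nothing in your proposal supplies this vanishing; the ``hidden cancellations'' you worry about are real, and without property (T) (or an equivalent input) there is no reason the cohomology of your Koszul complex should not pick up extra classes coming from edge sums, i.e.\ from the abelianisation of $\Gamma$.

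Property (T) is also silently needed in your last step: $\beta_1(\Gamma)=0$ is not something you can ``read off from the triangle presentation'' in general --- the abelianisation presented by $\xi_i+\xi_j+\xi_k=0$ over all triples of $\cl T$ is not obviously finite --- whereas it follows immediately from property (T). The Euler characteristic count itself ($\chi(\Gamma)=1+\frac{1}{3}(q-2)(q^2+q+1)$, so $\beta_2=\chi(\Gamma)-1$) is fine and agrees with the paper. So the proposal is not wrong in outline, but the step you yourself flag as the main obstacle is the theorem's actual content, and the tool that resolves it (property (T) applied to the edge-sum homomorphism, combined with the apartment/hexagon identities in $C(\Gamma)$) is absent from your plan.
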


The article \cite{rs3} identified the integer $r$ in (\ref{Kremark}) with
the rank of a certain finitely generated abelian group $C(\Gamma)$. Two new ideas lead to the proof of Theorem \ref{main}.
The local structure of the building $\Delta$, together
with the fact that $\Gamma$ has Kazhdan's property (T), is used to show that
$C(\Gamma)\otimes\bb R$ is isomorphic to the space of $\Gamma$-invariant $\bb R$-valued cochains on $\Delta$, in the sense of \cite{ads,eck}. Then, according to an isomorphism of Garland \cite{gd}, this space is isomorphic to $H^2(\Gamma,\bb R)$.

\begin{remark} An $\widetilde A_2$ group is a natural analogue of a free group, which acts freely and transitively
on the vertex set of a tree (which is a building of type $\widetilde A_1$).
If the tree is homogeneous of degree $q+1$, with $q\ge 2$, then
$\Gamma$ is a free group on $\frac{1}{2}(q+1)$ generators and one can again form the full crossed product $C^*$-algebra $\fk A_{\Gamma}=C(\Omega)\rtimes\Gamma$, where $\Omega$ is the space of ends of the tree. The analogue of Theorem
\ref{main} states that
$K_0(\fk A_\Gamma)\otimes \bb R \cong \bb R^{\beta_1}$,
  where $\beta_1=\dim_\bb R H^1(\Gamma, \bb R)=\frac{1}{2}(q+1)$
\cite[Theorem 1]{rtree}.
\end{remark}

\begin{remark}
Another simple $C^*$-algebra associated with  the $\widetilde A_2$ group $\Gamma$ is the reduced group
$C^*$-algebra $C_r^*(\Gamma)$.
It is shown in \cite[Theorem 6.1]{ro2005} that
$K_0(C_r^*(\Gamma)) = \bb Z^{\chi(\Gamma)}$.
This is a consequence of the fact that $\widetilde A_2$ groups
belong to the class of groups for which the Baum-Connes conjecture is known to be true.
\end{remark}

\begin{remark}
This paper is a sequel to the articles \cite{rs2}, \cite{rs3}. 
The key results used are \cite[Theorem 7.7]{rs2}, which shows that
$\fk A_{\Gamma}$ is isomorphic to a rank 2 Cuntz-Krieger algebra, and \cite[Theorem 2.1]{rs3} which shows that the $K$-theory of this algebra is given by equation (\ref{Kremark}).

What happens in the case of a torsion free $\widetilde A_n$-group $\Gamma$ ($n\ge 3$)?
There seems to be no fundamental obstruction to generalising \cite[Theorem 7.7]{rs2}, to identify the boundary crossed product algebra with a higher rank Cuntz-Krieger algebra, in the sense of
\cite{rs2}. The arguments of the present paper should also generalise, but additional conditions which are vacuous in the rank 2 case would need to be verified \cite[Theorem 2.3 (C),(D)]{ads}. 
However it would be more difficult to generalise \cite[Theorem 2.1]{rs3}. This is because the proof of that result uses 
a Kasparov spectral sequence \cite[Proposition 4.1]{rs3} whose limit is
clear only in the rank 2 case.
\end{remark}

\section{$\widetilde A_2$ groups}\label{A2tildegroups}

Consider a locally finite building $\Delta$ of type $\widetilde A_2$.  Each vertex $v$ of $\Delta$ has a type $\tau (v) \in \bb Z/3\bb Z$,
and each chamber (maximal simplex) of $\Delta$ has exactly one vertex of each type.
Each edge $e$ is directed, with initial vertex of type $i$ and final vertex of type $i+1$.
An automorphism $\alpha$ of $\Delta$ is \textit{type rotating} if there exists $i \in
\bb Z/3\bb Z$ such that $\tau(\alpha(v)) = \tau(v)+i$ for all vertices $v$
of $\Delta$.

Suppose that $\Gamma$ is a group of type rotating automorphisms of $\Delta$, which acts freely and transitively on the vertex set of $\Delta$. Such a group is called an $\widetilde A_2$ group. The theory of $\widetilde A_2$ groups has been developed in \cite{cmsz} and some, but not all, $\widetilde A_2$ groups embed as lattice subgroups of $\PGL_3(\bb K)$.
Any $\widetilde A_2$ group can be constructed as follows \cite [I, Section 3]{cmsz}.
Let $(P,L)$ be a finite projective plane of order $q$. There are $q^2 + q + 1$
points
 (elements of $P$) and $q^2+q+1$
lines (elements of $L$).
Let $\lambda : P \rightarrow L$ be a bijection and write $\lambda(\xi)=\ovl {\xi}$.
A {\it triangle presentation}  compatible with $\lambda$ is a set ${\mathcal T}$ of ordered triples $({\xi_i}, {\xi_j}, {\xi_k})$
where ${\xi_i}, {\xi_j}, {\xi_k} \in P$, with the following properties.
\begin{itemize}
\item[(i)]  Given ${\xi_i}, {\xi_j} \in P$, then $({\xi_i}, {\xi_j}, {\xi_k}) \in {\mathcal T}$ for some
${\xi_k} \in P$ if and only if ${\xi_j}$ and $\ovl {\xi_i}$ are incident, i.e. ${\xi_j} \in
\ovl {\xi_i}$.

\item[(ii)] $({\xi_i}, {\xi_j}, {\xi_k}) \in {\mathcal T} \Rightarrow ({\xi_j}, {\xi_k}, {\xi_i}) \in {\mathcal T}$.

\item[(iii)]  Given ${\xi_i}, {\xi_j} \in P$, then $({\xi_i}, {\xi_j}, {\xi_k}) \in {\mathcal T}$ for at
most one ${\xi_k} \in P$.
\end{itemize}
In \cite{cmsz} there is exhibited a complete list of triangle
presentations for $q = 2$
and $q = 3$.
Given a triangle presentation $\cl T$, one can form the group
\begin{equation}\label{rel}
\Gamma=\Gamma_{\cl T}
 = \big\langle P \ |\  {\xi_i} {\xi_j} {\xi_k} = 1  \hbox { for } ({\xi_i},
{\xi_j}, {\xi_k}) \in {\mathcal T}
\big \rangle .
\end{equation}
\noindent The Cayley graph of $\Gamma$ with respect to the
generating set $P$ is the $1$-skeleton of a building $\Delta$ of type $\widetilde A_2$.
Vertices are elements of $\Gamma$ and a directed edge of the form $(\gamma,\gamma\xi)$ with $\gamma\in\Gamma$ is labeled by the generator $\xi\in P$.

The link of a vertex $\gamma$ of $\Delta$ is the incidence graph of the projective plane $(P,L)$, where
the lines in $L$ correspond to the inverses in $\Gamma$ of the
generators in $P$. In other words,
$\ovl \xi = \xi^{-1}$ for $\xi \in P$.

\refstepcounter{picture}
\begin{figure}[htbp]
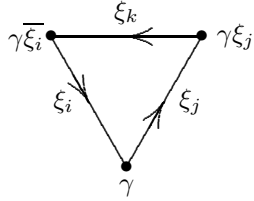
\label{A4}
\hfil
\centerline{
\beginpicture
\setcoordinatesystem units <1cm, 1.732cm>
\put {$\bullet$} at 0 0
\multiput {$\bullet$} at -1 1 *1 2 0 /
\put {$\gamma$} [t] at 0 -0.1
\put {$\gamma\ovl {\xi_i}$} [r] at -1.1 1
\put {$\gamma {\xi_j}$} [l] at 1.2 1
\arrow <10pt> [.2, .67] from  0.2 1 to 0 1
\arrow <10pt> [.2, .67] from  -0.7 0.7 to -0.5 0.5
\arrow <10pt> [.2, .67] from  0.3 0.3 to 0.5 0.5
\put {${\xi_i}$} [r ] at -0.7 0.5
\put {${\xi_j}$} [ l] at 0.7 0.5
\put {${\xi_k}$} [ b] at 0 1.1
\putrule from -1 1 to 1 1
\setlinear \plot -1 1 0 0 1 1 /
\endpicture
}
\hfil
\caption{A chamber based at a vertex $\gamma$}
\end{figure}

For the rest of this article, $\Gamma$ is assumed to be torsion free. 
Therefore $\Gamma$ acts freely on $\Delta$ and $X=\Gamma\backslash\Delta$ is a $2$-dimensional cell complex with universal covering $\Delta$. Let $X^k$ denote the set of oriented $k$-cells of $X$ for $k=0,1,2$.
Thus $X^1$ may be identified with $P$ and
$X^2$ may be identified with the set of orbits of elements of $\cl T$ under cyclic permutations.

Let $\widehat\Delta^2$ be the directed version of $\Delta^2$ in which each 2-simplex has a
specified base vertex, so that $\bb Z/3\bb Z$ acts naturally on $\widehat \Delta^2$.
Let $\widehat X^2:=\widehat\Delta^2/\Gamma$, the set of directed 2-cells of $X$.
Then $\widehat X^2$ may be identified with $\cl T$.
From now on $a= \ang{a_0,a_1,a_2}$ will denote an element of $\cl T$, regarded as a directed 2-cell. Figure \ref{directed chamber} illustrates the three directed 2-cells
associated with an oriented 2-cell of $X$. In the diagram, the 2-cells are thought of as being directed upwards and the symbol $\bullet$ is placed
opposite the ``top'' edge to indicate that direction.

\refstepcounter{picture}\label{directed chamber}
\begin{figure}[htbp]
\hfil
\centerline{
\beginpicture
\setcoordinatesystem units <1cm, 1.732cm>
\setplotarea x from -2 to 2, y from 0 to 1 
\arrow <8pt> [.2, .67] from  0.2 1 to 0 1
\arrow <8pt> [.2, .67] from  -0.7 0.7 to -0.5 0.5
\arrow <8pt> [.2, .67] from  0.3 0.3 to 0.5 0.5
\put {$\bullet$} at 0 0.3
\put {$a_1$} [r ] at -0.7 0.5
\put {$a_2$} [ l] at 0.7 0.5
\put {$a_0$} [ b] at 0 1.1
\putrule from -1 1 to 1 1
\setlinear \plot -1 1 0 0 1 1 /
\endpicture
\beginpicture
\setcoordinatesystem units <1cm, 1.732cm>
\setplotarea x from -2 to 2, y from 0 to 1 
\arrow <8pt> [.2, .67] from  0.2 1 to 0 1
\arrow <8pt> [.2, .67] from  -0.7 0.7 to -0.5 0.5
\arrow <8pt> [.2, .67] from  0.3 0.3 to 0.5 0.5
\put {$\bullet$} at 0 0.3
\put {$a_2$} [r ] at -0.7 0.5
\put {$a_0$} [ l] at 0.7 0.5
\put {$a_1$} [ b] at 0 1.1
\putrule from -1 1 to 1 1
\setlinear \plot -1 1 0 0 1 1 /
\endpicture
\beginpicture
\setcoordinatesystem units <1cm, 1.732cm>
\setplotarea x from -2 to 2, y from 0 to 1 
\arrow <8pt> [.2, .67] from  0.2 1 to 0 1
\arrow <8pt> [.2, .67] from  -0.7 0.7 to -0.5 0.5
\arrow <8pt> [.2, .67] from  0.3 0.3 to 0.5 0.5
\put {$\bullet$} at 0 0.3
\put {$a_0$} [r ] at -0.7 0.5
\put {$a_1$} [ l] at 0.7 0.5
\put {$a_2$} [ b] at 0 1.1
\putrule from -1 1 to 1 1
\setlinear \plot -1 1 0 0 1 1 /
\endpicture
}
\hfil
\caption{The directed 2-cells $\ang{a_0,a_1,a_2},\ang{a_1,a_2,a_0},\ang{a_2,a_0,a_1}$.}
\end{figure}

\section{K-theory}

Transition matrices $M=(m_{ab})_{a,b\in \widehat X^2}$ and
$N=(n_{ab})_{a,b\in \widehat X^2}$ are defined as follows. If $a,b\in \widehat X^2$ then $m_{ab}=1$ if and only if there are labeled triangles representing $a,b$ in the building $\Delta$ which lie as shown on the right of Figure \ref{transition}.
If no such diagram is possible then $m_{ab}=0$.

In terms of the projective plane $(P,L)$, the matrix $M$ is defined by
\begin{equation*}
m_{ab}=1\ \Longleftrightarrow \ b_2 \notin \ovl a_2,\  \ovl b_1=a_0\vee b_2.
\end{equation*}
It follows that each row or column of $M$ has precisely $q^2$ nonzero entries.

\refstepcounter{picture}
\begin{figure}[htbp]
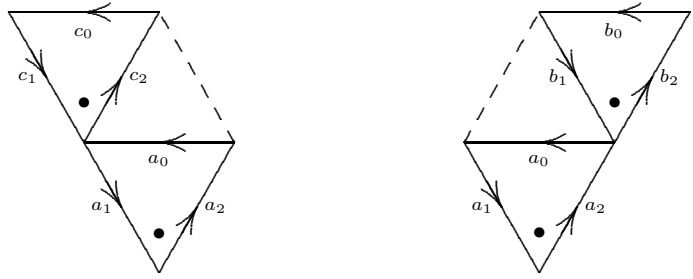
\label{transition}
\hfil
\centerline{
\beginpicture  
\setcoordinatesystem units <1cm, 1.732cm>
\setplotarea  x from -2 to 2,  y from -1 to 1
\put {$\bullet$} at 0 -0.7
\put {$\bullet$} at -1 0.3
\put {$_{a_1}$} [r ] at -0.6 -0.5
\put {$_{a_2}$} [ l] at 0.6 -0.5
\put {$_{c_1}$} [r ] at -1.6  0.5
\put {$_{c_2}$} [ l] at -0.4  0.5
\put {$_{a_0}$} [t] at  0.0   -0.1
\put {$_{c_0}$} [b] at  -1   0.8
\multiput {\beginpicture
\setcoordinatesystem units <1cm, 1.732cm>
\arrow <10pt> [.2, .67] from  0.2 1 to 0 1
\arrow <10pt> [.2, .67] from  -0.7 0.7 to -0.5 0.5
\arrow <10pt> [.2, .67] from  0.3 0.3 to 0.5 0.5
\putrule from -1 1 to 1 1
\setlinear \plot -1 1 0 0 1 1 /
\endpicture}  at   0 0  -1 1  /
\setdashes \plot 1 0  0 1 /
\endpicture
\qquad\quad   
\beginpicture  
\setcoordinatesystem units <1cm, 1.732cm>
\setplotarea  x from -2 to 2,  y from -1 to 1
\put {$\bullet$} at 0 -0.7
\put {$\bullet$} at 1 0.3
\put {$_{a_1}$} [r ] at -0.6 -0.5
\put {$_{a_2}$} [ l] at 0.6 -0.5
\put {$_{b_1}$} [r ] at 0.4  0.5
\put {$_{b_2}$} [ l] at 1.6  0.5
\put {$_{a_0}$} [t] at  0.0   -0.1
\put {$_{b_0}$} [b] at  1.0   0.8
\multiput {\beginpicture
\setcoordinatesystem units <1cm, 1.732cm>
\arrow <10pt> [.2, .67] from  0.2 1 to 0 1
\arrow <10pt> [.2, .67] from  -0.7 0.7 to -0.5 0.5
\arrow <10pt> [.2, .67] from  0.3 0.3 to 0.5 0.5
\putrule from -1 1 to 1 1
\setlinear \plot -1 1 0 0 1 1 /
\endpicture}  at   0 0  1 1  /
\setdashes \plot -1 0  0 1 /
\endpicture
}
\hfil
\caption{The relations $n_{ac}=1$ and $m_{ab}=1$}
\end{figure}

\noindent Similarly, the matrix $N$ is defined by
\begin{equation*}
n_{ac}=1\ \Longleftrightarrow \ a_1 \notin \ovl c_1,\ c_2=\ovl a_0\wedge \ovl c_1.
\end{equation*}
as illustrated on the left of Figure \ref{transition}.

Let $r$ be the rank, and $T$ the torsion part, of the abelian group $C(\Gamma)$ with generating set $\widehat X^2$ and relations

\begin{equation}\label{rels}
a=\sum_{b\in \widehat X^2}m_{ab}b =\sum_{b\in \widehat X^2}n_{ab}b,\quad a\in \widehat X^2.
\end{equation}
Thus
$C(\Gamma)\cong\bb Z^r\oplus T$.
The following result was proved in
\cite[Theorem 2.1]{rs3}.

\begin{theorem}\label{Ki}
Let $\Gamma$ be an $\widetilde A_2$ group, and let $r$ be the rank, and $T$ the torsion part of
$C(\Gamma)$.
Then
\begin{equation}\label{K}
K_0(\fk A_\Gamma) = K_1(\fk A_\Gamma)= \bb Z^{2r}\oplus T.
\end{equation}
\end{theorem}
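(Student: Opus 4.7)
The plan is to exploit the rank-2 Cuntz-Krieger description of $\fk A_\Gamma$ provided by \cite[Theorem 7.7]{rs2} and compute K-theory via a Kasparov-type spectral sequence for a $\bb Z^2$-crossed product. The gauge action of the 2-torus $\bb T^2$ on $\fk A_\Gamma$ has fixed-point subalgebra $\cl F$, an AF algebra whose $K_0$-group $V$ carries commuting $\bb Z$-actions by $M^T$ and $N^T$, with $K_1(\cl F)=0$. The Takai-type Pimsner-Voiculescu machinery, applied twice, then produces a spectral sequence
\begin{equation*}
E_2^{p,q}=H_p(\bb Z^2;K_q(\cl F))\ \Longrightarrow\ K_{p+q}(\fk A_\Gamma),
\end{equation*}
supported only on the row $q=0$ and the columns $p=0,1,2$.

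The $E_2$-page is the homology of the Koszul complex for the $\bb Z^2$-action on $V$:
\begin{equation*}
0\to V\xrightarrow{d_2}V\oplus V\xrightarrow{d_1}V\to 0,
\end{equation*}
with $d_2(v)=(-(I-N^T)v,(I-M^T)v)$ and $d_1(v_1,v_2)=(I-M^T)v_1+(I-N^T)v_2$. Inspection of the defining relations (\ref{rels}) identifies $H_0=V/((I-M^T)V+(I-N^T)V)$ with $C(\Gamma)\cong\bb Z^r\oplus T$. The top homology $H_2=\ker(I-M^T)\cap\ker(I-N^T)$ is a subgroup of the torsion-free group $V$, hence free abelian; Poincar\'e duality for $\bb Z^2$ (giving $H^i(\bb Z^2;V\otimes\bb R)\cong H_{2-i}(\bb Z^2;V\otimes\bb R)$) forces its rank to equal that of $H_0$, so $H_2\cong\bb Z^r$. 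The Euler characteristic of the Koszul complex then forces $\mathrm{rank}(H_1)=2r$, and a diagram chase using the two short exact sequences $0\to\ker(I-M^T)\to V\to\mathrm{im}(I-M^T)\to 0$ and its $N^T$-analogue identifies the torsion subgroup of $H_1$ with $T$, yielding $H_1\cong\bb Z^{2r}\oplus T$.

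All higher differentials $d_r$ ($r\ge 3$) have either domain or codomain outside the strip $0\le p\le 2$, so the spectral sequence collapses at $E_2$. This produces an extension $0\to H_0\to K_0(\fk A_\Gamma)\to H_2\to 0$ which splits because $H_2$ is free, giving $K_0(\fk A_\Gamma)\cong H_0\oplus H_2\cong\bb Z^{2r}\oplus T$, while $K_1(\fk A_\Gamma)\cong H_1\cong\bb Z^{2r}\oplus T$, as required. The main obstacle will be the torsion-matching step for $H_1$: the free ranks are controlled by Euler characteristic and Poincar\'e duality, but pinning the torsion of $H_1$ down as exactly $T$ requires the specific compatibility between $M$ and $N$ coming from the projective-plane triangle presentation, rather than purely abstract $\bb Z^2$-module formalism.
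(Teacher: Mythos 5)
First, a point of reference: the paper gives no proof of Theorem~\ref{Ki} at all --- it is imported verbatim from \cite[Theorem 2.1]{rs3} --- so your proposal has to be measured against the argument there. Your strategy is in fact the same one: realise $\fk A_\Gamma$ stably as a crossed product of an AF algebra by $\bb Z^2$, run the Kasparov spectral sequence $E^2_{p,q}=H_p(\bb Z^2;K_q(\cl F))$, note that it is concentrated in the columns $p=0,1,2$ (and, by Bott periodicity, the even rows) so that it collapses at $E^2$, and read off $K_1\cong H_1$ together with an extension of $H_2$ by $H_0$ computing $K_0$. The rank bookkeeping --- vanishing Euler characteristic of the Koszul complex plus the duality $\dim_{\bb R}H_2=\dim_{\bb R}H_0$ --- is also how the free ranks $r,2r,r$ are obtained in \cite{rs3}, and the paper's own Remark 1.4 confirms this is the intended route.

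Two genuine gaps remain. First, $V=K_0(\cl F)$ is a direct limit of copies of $\bb Z^{\widehat X^2}$ along the non-invertible maps $M^T,N^T$, not $\bb Z^{\widehat X^2}$ itself. Identifying $H_0(\bb Z^2;V)$ with $C(\Gamma)$ and $H_2(\bb Z^2;V)$ with $\ker(I-M^T)\cap\ker(I-N^T)\subseteq\bb Z^{\widehat X^2}$ requires passing group homology through the limit and checking that the connecting maps induce the identity on each $H_p$ (they do, but this must be said). Your justification that $H_2$ is free abelian --- ``a subgroup of the torsion-free group $V$, hence free abelian'' --- is not valid: a torsion-free abelian group of finite rank need not be free, and such direct limits typically contain copies of $\bb Z[1/q]$. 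Freeness holds only after the reduction to $\bb Z^{\widehat X^2}$, where $H_2$ becomes a subgroup of a finitely generated free group; without it the splitting of $0\to H_0\to K_0(\fk A_\Gamma)\to H_2\to 0$ is unjustified. Second, and more seriously, the identification of the torsion subgroup of $H_1$ with $T$ is asserted rather than proved: for arbitrary commuting $0$--$1$ matrices the torsion of $\ker d_1/\im d_2$ need not coincide with that of $\coker d_1$, and no diagram chase with the two short exact sequences you name will yield it. This is exactly where the substantive work of \cite[Theorem 2.1]{rs3} lies, exploiting the specific combinatorial relation between $M$ and $N$ forced by the triangle presentation. You flag this honestly in your final sentence, but it means the computation of $K_1(\fk A_\Gamma)$ --- half of the theorem --- is not actually established by the proposal.
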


\noindent Given $\xi\in P$, let
\begin{equation}\label{edgesum}
\ang {\xi} =
\sum_{\substack{a\in \widehat X^2 \\ a_2=\xi}}a
\in C(\Gamma).
\end{equation}
It is sometimes convenient to write such sums pictorially as
\begin{equation}\label{pict}
\ang {\xi}
=
\displaystyle
\sum_{
\beginpicture
\setcoordinatesystem units <.4cm, .7cm>
\setplotarea x from -1 to 1, y from 0 to 1 
\put {$_\bullet$} at 0 0
\put {$_a$}  at 0 0.6
\arrow <6pt> [.2, .67] from  0.3 0.3 to 0.5 0.5
\put {$_\xi$} at  0.9 0.4
\putrule from -1 1 to 1 1
\setlinear \plot  -1 1  0 0  1 1 /
\endpicture
}
a.
\end{equation}
Note that $a\in \widehat X^2$ with $a_2=\xi$ if and only if
$a=\ang{a_0,a_1,a_2}$ where $a_2=\xi$ and $a_0\in\ovl{\xi}$ (and $a_1$ is then uniquely determined).
There are $q+1$ such choices of $a_0$ and so there are $q+1$ terms in the sum (\ref{edgesum}). Similar remarks apply to the element
$\ang {\overline\xi}\in C(\Gamma)$ defined by
\begin{equation}\label{inverseedgesum}
\ang {\overline\xi} =
\sum_{\substack{a\in \widehat X^2 \\ a_1=\xi}}a
=
\sum_{
\beginpicture
\setcoordinatesystem units <.4cm, .7cm>
\setplotarea x from -1 to 1, y from 0 to 1 
\put {$_\bullet$} at 0 0
\put {$_a$}  at 0 0.6
\arrow <6pt> [.2, .67] from  -0.7 0.7 to -0.5 0.5
\put {$_\xi$} at  -0.8 0.4
\putrule from -1 1 to 1 1
\setlinear \plot  -1 1  0 0  1 1 /
\endpicture
}
a.
\end{equation}

\noindent In what follows the element
$$
\varepsilon =
\sum_{a\in \widehat X^2}a
$$
plays a special role.
An important observation, which is needed subsequently, is that $\varepsilon$ has finite order in $C(\Gamma)$.
The statement and its proof are very like \cite[Proposition 8.2]{rs3}.

\begin{lemma}\label{q2}
In the group $C(\Gamma)$,  $(q^2-1)\varepsilon = 0$.
\end{lemma}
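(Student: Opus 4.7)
The plan is to exploit the relations (\ref{rels}) directly by summing them over all $a \in \widehat X^2$. Since every element $a \in \widehat X^2$ satisfies $a = \sum_{b \in \widehat X^2} m_{ab}\,b$ in $C(\Gamma)$, summing both sides over $a$ yields
\begin{equation*}
\varepsilon \;=\; \sum_{a \in \widehat X^2} a \;=\; \sum_{a,b \in \widehat X^2} m_{ab}\,b \;=\; \sum_{b \in \widehat X^2} \Bigl(\sum_{a \in \widehat X^2} m_{ab}\Bigr) b.
\end{equation*}
The key input is the combinatorial observation, already recorded in the excerpt, that every column (as well as every row) of the $0/1$ matrix $M$ has exactly $q^2$ nonzero entries; equivalently, each $b \in \widehat X^2$ appears as the ``upper'' directed 2-cell in exactly $q^2$ of the configurations depicted on the right of Figure \ref{transition}.

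Substituting this column sum, each coefficient becomes $q^2$, so
\begin{equation*}
\varepsilon \;=\; q^2 \sum_{b \in \widehat X^2} b \;=\; q^2 \varepsilon,
\end{equation*}
which rearranges to $(q^2-1)\varepsilon = 0$ in $C(\Gamma)$. Note that one could equally well perform the same calculation with the other transition matrix $N$, since $N$ also has constant row and column sums equal to $q^2$ by the dual incidence count in the projective plane.

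I do not anticipate a significant obstacle: the entire argument reduces to a double-summation interchange, and the only nontrivial ingredient, namely the constant column sum $q^2$ of $M$, has already been verified in the text immediately following the definition of $M$. This is why the statement is advertised as being parallel to \cite[Proposition 8.2]{rs3}.
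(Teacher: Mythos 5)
Your proposal is correct and follows exactly the same argument as the paper: sum the relation $a=\sum_b m_{ab}b$ over all $a\in\widehat X^2$, interchange the order of summation, and use the fact that each column of $M$ has exactly $q^2$ nonzero entries to obtain $\varepsilon=q^2\varepsilon$. No differences worth noting.
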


\begin{proof}
Using relations (\ref{rels}) and the fact that each column of the matrix $M$ has
precisely $q^2$ nonzero entries,
  \begin{equation*}
  \e=\sum_{a\in \widehat X^2}a
=\sum_{a\in \widehat X^2}\sum_{b\in \widehat X^2}m_{ab}b= \sum_{b\in \widehat X^2}\left(\sum_{a\in \widehat X^2}m_{ab}\right)b =
\sum_{b\in \widehat X^2}q^2b
  = q^2\e.
\end{equation*}
\end{proof}
\begin{lemma}
If $\ang{a_0,a_1,a_2}\in \widehat X^2$ then, in the group $C(\Gamma)$,
\begin{subequations}\label{rel1}
\begin{eqnarray}
\ang {a_1} -\ang{a_2,a_0,a_1} &=& \ang{\overline a_2}- \ang{a_1,a_2,a_0} ; \label{rel1a}\\
\ang{a_0} + \ang{a_1} + \ang{a_2} &=& \varepsilon.
\label{rel1b}
\end{eqnarray}
\end{subequations}
\end{lemma}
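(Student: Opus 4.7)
The plan is to derive both identities directly from the defining relations (\ref{rels}) of $C(\Gamma)$, using the cyclic symmetry~(ii) of the triangle presentation and the incidence combinatorics of the projective plane $(P,L)$.

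For (\ref{rel1a}), I would first observe by property~(ii) that both $(a_2,a_0,a_1)$ and $(a_1,a_2,a_0)$ lie in $\mathcal{T}$; moreover, $(a_2,a_0,a_1)$ is one of the $q+1$ summands of $\langle a_1\rangle$ (its third coordinate being $a_1$), and $(a_1,a_2,a_0)$ is one of the $q+1$ summands of $\langle \overline{a_2}\rangle$ (its second coordinate being $a_2$). Each side of (\ref{rel1a}) therefore reduces to a sum of $q$ specific directed $2$-cells at a common base vertex; the two sums do not agree as formal expressions, so the identification must come from the relations (\ref{rels}). My plan is to apply the $M$-relation to $\langle a_2,a_0,a_1\rangle$ and the $N$-relation to $\langle a_1,a_2,a_0\rangle$, then match the two expansions term by term using the three incidences $a_{i+1}\in\overline{a_i}$ (indices mod~$3$) forced by $(a_0,a_1,a_2)\in\mathcal T$ together with~(ii).

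For (\ref{rel1b}), start from the tautology $\varepsilon = \sum_{\xi\in P}\langle\xi\rangle$, obtained by partitioning $\widehat X^2$ according to the third coordinate. The identity then reduces to the vanishing $\sum_{\xi\in P\setminus\{a_0,a_1,a_2\}}\langle\xi\rangle = 0$ in $C(\Gamma)$. Expanding each $\langle\xi\rangle$ via the $M$-relation and interchanging the order of summation, the coefficient of a given $c\in \widehat X^2$ becomes a count of incident point--line configurations in $(P,L)$ determined by $c$ and the triple $(a_0,a_1,a_2)$, which by the geometry of the projective plane should collapse so as to yield the vanishing.

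The principal obstacle will be the combinatorial bookkeeping. Identity (\ref{rel1b}) is especially striking because it reduces the sum $\varepsilon$ of $(q+1)(q^2+q+1)$ formal symbols to one of only $3(q+1)$ terms; this is possible only through the dense interdependence among the defining relations, already reflected in Lemma~\ref{q2}.
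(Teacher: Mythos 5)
Your reduction of (\ref{rel1b}) to the tautology $\varepsilon=\sum_{\xi\in P}\ang{\xi}$ is correct, as is your observation that each side of (\ref{rel1a}) is a sum of $q$ specific cells; but both of your proposed closing moves fail as stated. For (\ref{rel1a}): after expanding the single cell $\ang{a_2,a_0,a_1}$ by the $M$-relation and $\ang{a_1,a_2,a_0}$ by the $N$-relation, the left side becomes ($q+1$ cells with $b_2=a_1$) minus ($q^2$ cells), and the right side ($q+1$ cells with $b_1=a_2$) minus ($q^2$ cells); the positive and negative parts live on essentially disjoint supports, so no term-by-term match is possible and further relations would have to be applied — you have expanded the wrong terms. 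For (\ref{rel1b}): expanding $\sum_{b:\,b_2\notin\{a_0,a_1,a_2\}}b$ once by the $M$-relation and interchanging sums gives $\sum_c(q^2-k_c)\,c$, where the incidence count you allude to (two distinct lines of $(P,L)$ meet in one point) yields $k_c=\#\{i: c_2\notin\ovl{a_i}\}\in\{0,1,2,3\}$. By Lemma \ref{q2} the claim then reduces to $\sum_c(k_c-1)\,c=0$, which is not a formal identity (for $q\ge 3$ there exist cells with $k_c=3$ and cells with $k_c\le 2$), so the hoped-for collapse does not occur after one expansion.

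The missing idea is the one the paper uses to organize the bookkeeping globally rather than link-locally: represent each generator by its unique directed chamber based at a fixed vertex $1$ of $\Delta$, and invoke the fact \cite[Section 15.4]{gar} that every other chamber based at $1$ lies in a common apartment with $\sigma_a$ in exactly one of five relative positions $\tau_2,\dots,\tau_6$. For (\ref{rel1a}) one applies the $M$-relation to each of the $q$ \emph{remaining} cells on the left (those in position $\tau_6$) and the $N$-relation to each of the $q$ remaining cells on the right (position $\tau_2$), and shows both expansions equal the full sum over position $\tau_4$; the crucial point, which no purely projective-plane matching supplies, is that each position-$\tau_4$ chamber arises from a \emph{unique} position-$\tau_6$ chamber and a unique position-$\tau_2$ chamber, proved by taking the convex hull with $\sigma_a$ inside an apartment. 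Identity (\ref{rel1b}) then follows because $\ang{a_2}$, $\ang{a_1}$, $\ang{a_0}$ account for $\{\sigma_a\}\cup\tau_2$, $\tau_3\cup\tau_4$ and $\tau_5\cup\tau_6$ respectively, which partition $\widehat X^2$. Your strategy stalls exactly where this global input is needed.
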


\begin{proof}
Fix the base vertex $1\in \Delta$. Any generator $a$ for $C(\Gamma)$ has a unique representative
directed chamber $\sigma_a$ based at $1$.
The chamber $\sigma_a$ has vertices $1, a_1^{-1}, a_2$.
By \cite[Section 15.4]{gar}, each chamber based at $1$, other than $\sigma_a$ lies in a common apartment with $\sigma_a$, in exactly one of the five positions $\tau_2, \tau_3, \tau_4, \tau_5, \tau_6$ in Figure \ref{hexagon}.
As before, directed chambers will be pointed.

\refstepcounter{picture}
\begin{figure}[htbp]\label{hexagon}
\hfil
\centerline{
\beginpicture
\setcoordinatesystem units <1cm, 1.732cm>
\setplotarea  x from -2 to 2,  y from -1.5 to 1
\put {$_{a_1}$} [r ] at -0.6 0.5
\put {$_{a_2}$} [ l] at 0.6 0.5
\put {$_{a_0}$} [t] at  0.0   1.2
\arrow <10pt> [.2, .67] from  0.2 1 to 0 1
\arrow <10pt> [.2, .67] from  -0.7 0.7 to -0.5 0.5
\arrow <10pt> [.2, .67] from  0.3 0.3 to 0.5 0.5
\put {$\bullet$} at 0 0.3
\put {$1$} at 0.3 0.0
\put {$\tau_2$} at  1.1 0.3
\put {$\sigma_a$} at  0 0.6
\put {$\tau_3$} at  1.1 -0.3
\put {$\tau_4$} at  0 -0.7
\put {$\tau_5$} at -1.1 -0.3
\put {$\tau_6$} at -1.1 0.3
\putrule from -1 1 to 1 1
\putrule from -2 0 to 0.2 0  
\putrule from 0.4 0 to 2 0
\putrule from -1 -1 to 1 -1
\setlinear \plot 0 0  1 1  2 0  1 -1  0 0  -1 1  -2 0  -1 -1  0 0 /
\setshadegrid span <1.5pt>
\vshade   -1  1  1  <,z,,>  0  0  1  <z,,,>  1 1 1 /
\endpicture
}
\hfil
\caption{}
\end{figure}

\noindent The left side of (\ref{rel1a}) is equal to the sum of all the
elements $b\in \widehat X^2$ represented by directed chambers $\sigma_b$ in position $\tau_6$, as illustrated in Figure \ref{abchexagons}(a).
Each such element $b$ satisfies $b_2=a_1$, and the relations (\ref{rels}) imply that
$$
b= \sum_{c\in \widehat X^2}m_{bc}c.
$$
That is, $b$ is the sum of all the elements $c\in \widehat X^2$ with representative directed chambers $\sigma_c$ lying in position $\tau_4$, as illustrated in Figure \ref{abchexagons}(a).
Moreover, if $\sigma_c$ is any directed chamber with base vertex $1$, lying
in position $\tau_4$, then it arises in this way from a unique chamber $\sigma_b$ in position $\tau_6$. To see this, it is enough to take the convex hull of any such
chamber $\sigma_c$ with $\sigma_a$, which completely determines the whole hexagon in Figure \ref{abchexagons}(a).
Therefore the left side of (\ref{rel1a}) is equal to the sum of all the
elements $c\in \widehat X^2$ represented by directed chambers $\sigma_c$ based at $1$ which lie in position $\tau_4$ of Figure \ref{hexagon}.

\refstepcounter{picture}
\begin{figure}[htbp]\label{abchexagons}
\hfil
\centerline{
\beginpicture
\setcoordinatesystem units <0.7cm, 1.2cm>
\setplotarea  x from -2 to 2,  y from -1.5 to 1
\put {(a)} at 0 -1.4
\put {$\bullet$} at -1 0.7
\put {$\bullet$} at 0 -0.3
\arrow <7pt> [.2, .67] from  -0.7 0.7 to -0.5 0.5
\put {$\sigma_c$} at  0 -0.7
\put {$\sigma_b$} at -1.1 0.3
\putrule from -1 1 to 1 1
\putrule from -2 0 to 2 0
\putrule from -1 -1 to 1 -1
\setlinear \plot 0 0  1 1  2 0  1 -1  0 0  -1 1  -2 0  -1 -1  0 0 /
\setshadegrid span <1.5pt>
\vshade   -1  1  1  <,z,,>  0  0  1  <z,,,>  1 1 1 /
\endpicture
\qquad\qquad
\beginpicture
\setcoordinatesystem units <0.7cm, 1.2cm>
\setplotarea  x from -2 to 2,  y from -1.5 to 1
\put {(b)} at 0 -1.4
\put {$\bullet$} at 0 -0.3
\put {$\bullet$} at 1 0.7
\arrow <7pt> [.2, .67] from  0.4 0.4 to 0.6 0.6
\put {$\sigma_b$} at  1.1 0.3
\put {$\sigma_c$} at  0 -0.7
\putrule from -1 1 to 1 1
\putrule from -2 0 to 2 0
\putrule from -1 -1 to 1 -1
\setlinear \plot 0 0  1 1  2 0  1 -1  0 0  -1 1  -2 0  -1 -1  0 0 /
\setshadegrid span <1.5pt>
\vshade   -1  1  1  <,z,,>  0  0  1  <z,,,>  1 1 1 /
\endpicture
}
\hfil
\caption{}
\end{figure}

\noindent Similarly, the right side of (\ref{rel1a}) is equal to the sum of all the  elements $b\in \widehat X^2$ represented by directed chambers $\sigma_b$ in position $\tau_2$ as illustrated in Figure \ref{abchexagons}(b).
The relations (\ref{rels}) imply that, for each such chamber $b$,
$$
b= \sum_{b\in \widehat X^2}n_{ac}c.
$$
It follows that the right side of (\ref{rel1a}) is also equal to the sum of all the elements $c\in \widehat X^2$ represented by directed chambers based at $1$ which lie in position $\tau_4$ of Figure \ref{hexagon}.
This proves that the left and right sides of (\ref{rel1a}) are equal.

The next task is to prove (\ref{rel1b}).
Recall that $\varepsilon$ is the sum of all the elements of $\widehat X^2$,
and representative directed chambers for elements of this sum are $\sigma_a$  together with all chambers lying in any of the five positions $\tau_2, \tau_3, \tau_4, \tau_5, \tau_6$ in Figure \ref{hexagon}.

The set of chambers based at the vertex $1$ representing the elements of the sum $\ang{a_2}$
consists of $\sigma_a$ together with all directed chambers lying in the position $\tau_2$, as illustrated in Figure \ref{subhexagons}(a).
Here it may also be convenient to refer back to equation (\ref{pict}).

\refstepcounter{picture}
\begin{figure}[htbp]\label{subhexagons}
\hfil
\centerline{
\beginpicture
\setcoordinatesystem units <0.7cm, 1.2cm>
\setplotarea  x from -2 to 2,  y from -1.5 to 1
\put {(a)} at 0 -1.4
\put {$\bullet$} at 0 0.3
\put {$\bullet$} at 0.5 0.15
\put {$_{a_2}$} at 0.4 0.7
\arrow <7pt> [.2, .67] from  0.4 0.4 to 0.6 0.6
\put {$\tau_2$} at  1.1 0.3
\put {$\tau_3$} at  1.1 -0.3
\put {$\tau_4$} at  0 -0.7
\put {$\tau_5$} at -1.1 -0.3
\put {$\tau_6$} at -1.1 0.3
\putrule from -1 1 to 1 1
\putrule from -2 0 to 2 0
\putrule from -1 -1 to 1 -1
\setlinear \plot 0 0  1 1  2 0  1 -1  0 0  -1 1  -2 0  -1 -1  0 0 /
\setshadegrid span <1.5pt>
\vshade   -1  1  1  <,z,,>  0  0  1  <z,,,>  1 1 1 /
\endpicture
\qquad
\beginpicture
\setcoordinatesystem units <0.7cm, 1.2cm>
\setplotarea  x from -2 to 2,  y from -1.5 to 1
\put {(b)} at 0 -1.4
\put {$\bullet$} at 0 -0.3
\put {$\bullet$} at 0.5 -0.15
\put {$_{a_1}$} at -0.2 0.5
\arrow <7pt> [.2, .67] from  -0.7 0.7 to -0.5 0.5
\put {$\tau_2$} at  1.1 0.3
\put {$\tau_3$} at  1.1 -0.3
\put {$\tau_4$} at  0 -0.7
\put {$\tau_5$} at -1.1 -0.3
\put {$\tau_6$} at -1.1 0.3
\putrule from -1 1 to 1 1
\putrule from -2 0 to 2 0
\putrule from -1 -1 to 1 -1
\setlinear \plot 0 0  1 1  2 0  1 -1  0 0  -1 1  -2 0  -1 -1  0 0 /
\setshadegrid span <1.5pt>
\vshade   -1  1  1  <,z,,>  0  0  1  <z,,,>  1 1 1 /
\endpicture
\qquad
\beginpicture
\setcoordinatesystem units <0.7cm, 1.2cm>
\setplotarea  x from -2 to 2,  y from -1.5 to 1
\put {(c)} at 0 -1.4
\put {$\bullet$} at -0.5 0.15
\put {$\bullet$} at -0.5 -0.15
\put {$_{a_0}$} at  0.0   0.83
\arrow <7pt> [.2, .67] from  0.2 1 to 0 1
\put {$\tau_2$} at  1.1 0.3
\put {$\tau_3$} at  1.1 -0.3
\put {$\tau_4$} at  0 -0.7
\put {$\tau_5$} at -1.1 -0.3
\put {$\tau_6$} at -1.1 0.3
\putrule from -1 1 to 1 1
\putrule from -2 0 to 2 0
\putrule from -1 -1 to 1 -1
\setlinear \plot 0 0  1 1  2 0  1 -1  0 0  -1 1  -2 0  -1 -1  0 0 /
\setshadegrid span <1.5pt>
\vshade   -1  1  1  <,z,,>  0  0  1  <z,,,>  1 1 1 /
\endpicture
}
\hfil
\caption{}
\end{figure}
\noindent Using the relations (\ref{rels}),
the sum $\ang{a_1}$ is equal to the sum of elements represented by
chambers lying in the position $\tau_3$ or $\tau_4$,
as in Figure \ref{subhexagons}(b).

Finally, the sum $\ang{a_0}$ is equal to the sum of elements
represented by directed chambers lying in the position $\tau_5$ or $\tau_6$,
as in Figure \ref{subhexagons}(c).
For, cyclically permuting the indices in the equation (\ref{rel1a}) gives
$$
\ang {a_0} -\ang{a_1,a_2,a_0} = \ang{\overline a_1}- \ang{a_0,a_1,a_2}.
$$
Therefore $\ang {a_0} =\ang{a_1,a_2,a_0} + \ang{\overline a_1}- \ang{a_0,a_1,a_2}$.
Referring to  Figure \ref{subhexagons}(c), the relations (\ref{rels}) show that $\ang{a_1,a_2,a_0}$ is the sum of elements
represented by directed chambers in position $\tau_5$.
Also $\ang{\overline a_1}- \ang{a_0,a_1,a_2}$ is
the sum of elements represented by directed chambers in position $\tau_6$.

This completes the proof that $\ang{a_0} + \ang{a_1} + \ang{a_2} = \varepsilon$.
\end{proof}

\noindent The next lemma is a major step in the proof of the main theorem.
It depends on the fact that $\Gamma$ has Kazhdan's property (T),
which in turn depends only on the local structure of the building $\Delta$.
See, for example, the proof of \cite[Theorem 5.7.7]{bhv}.

\begin{lemma}\label{33}
In the group $C(\Gamma)\otimes\bb R$,
for all $\ang{a_0,a_1,a_2}\in \widehat X^2$ and $\xi \in P$,
  \begin{subequations}\label{relz1}
\begin{eqnarray}
&&\ang{a_0,a_1,a_2}\otimes 1=\ang{a_1,a_2,a_0}\otimes 1=\ang{a_2,a_0,a_1}\otimes 1; \label{relza}\\
&&\ang{\xi}\otimes 1 = 0 = \ang{\ovl\xi}\otimes 1. \label{relzb}
\end{eqnarray}
\end{subequations}
\end{lemma}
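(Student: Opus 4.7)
The plan is to use Lemma~\ref{q2} to kill $\varepsilon$ over $\bb R$, to reduce the cyclic invariance (\ref{relza}) to the edge-sum vanishing (\ref{relzb}), and then to establish (\ref{relzb}) by dualizing and appealing to Kazhdan's property (T). First, Lemma~\ref{q2} gives $(q^2-1)\varepsilon=0$ in $C(\Gamma)$; since $q\ge 2$, the factor $q^2-1$ is a unit in $\bb R$, hence $\varepsilon\otimes 1 = 0$ in $C(\Gamma)\otimes\bb R$. Substituting into (\ref{rel1b}) yields
\[
\ang{a_0}+\ang{a_1}+\ang{a_2}=0 \quad \text{in } C(\Gamma)\otimes\bb R
\]
for every $\ang{a_0,a_1,a_2}\in \widehat X^2$. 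Moreover, granting (\ref{relzb}), relation (\ref{rel1a}) collapses to $-\ang{a_2,a_0,a_1}=-\ang{a_1,a_2,a_0}$, and cyclically permuting the indices delivers the remaining equalities in (\ref{relza}). The whole lemma therefore reduces to proving (\ref{relzb}).

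For (\ref{relzb}) I would dualize. The $\bb R$-linear dual of $C(\Gamma)\otimes\bb R$ is the finite-dimensional space $\cl H$ of functions $f:\widehat X^2\to\bb R$ satisfying $f(a)=\sum_b m_{ab} f(b)=\sum_b n_{ab} f(b)$ for every $a$. Each such $f$ lifts to a $\Gamma$-invariant function $\widetilde f$ on the directed 2-cells of $\Delta$, and the two matrix identities should encode local harmonicity conditions at each edge of $\Delta$, identifying $\widetilde f$ with a $\Gamma$-invariant harmonic 2-cochain in the sense of \cite{ads,eck}. Under this duality, (\ref{relzb}) becomes the assertion that $\sum_{a_2=\xi} f(a) = 0$ and $\sum_{a_1=\xi} f(a) = 0$ for every $f\in\cl H$ and every $\xi\in P$.

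These vanishings are where property (T) enters. The link of every vertex of $\Delta$ is the incidence graph of a projective plane of order $q$, whose combinatorial Laplacian has spectral gap strictly greater than $1/2$, and this local spectral information produces property (T) for $\Gamma$ \cite[Theorem 5.7.7]{bhv}. The plan is to assemble the edge sums into a $\Gamma$-invariant 1-cocycle for a suitable unitary representation of $\Gamma$ constructed from the link data (for example, from $\ell^2$ of the points of the projective plane); by property (T) such a cocycle must be a coboundary, and the harmonicity of $\widetilde f$ combined with the local spectral gap should then force the cocycle to be identically zero.

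The main obstacle is precisely this last identification. One must pin down the correct $\Gamma$-representation and the correct 1-cocycle, and verify that the Kazhdan vanishing combines with local harmonicity to kill the edge sums in question. Once that is in place, the other two ingredients are essentially formal: the first step follows directly from Lemma~\ref{q2} and (\ref{rel1b}), and the cyclic invariance (\ref{relza}) drops out of (\ref{rel1a}) as soon as (\ref{relzb}) is known.
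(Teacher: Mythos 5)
Your first and last steps match the paper: Lemma~\ref{q2} kills $\varepsilon\otimes 1$, hence (\ref{rel1b}) gives $\ang{a_0}\otimes 1+\ang{a_1}\otimes 1+\ang{a_2}\otimes 1=0$, and once (\ref{relzb}) is known, (\ref{rel1a}) and cyclic permutation yield (\ref{relza}). But the central claim (\ref{relzb}) is exactly where your proposal stops being a proof, and the route you sketch has two problems. First, it is partially circular: identifying the dual space $\cl H$ (cut out by the $M$ and $N$ relations) with the space of harmonic cochains (cut out by the cyclic-invariance and edge-sum conditions (\ref{harmonic})) is, up to dualization, precisely the statement of this lemma together with Lemma~\ref{presentation0}; you cannot assume that identification in order to prove (\ref{relzb}). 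Second, the machinery you invoke --- building a unitary representation from the link data, producing a $1$-cocycle from the edge sums, and combining the Delorme--Guichardet vanishing with a local spectral gap --- is never constructed, and you yourself flag it as the main obstacle. As written, the key step is missing.

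The idea you are missing is much more elementary. The relation $\ang{a_0}\otimes 1+\ang{a_1}\otimes 1+\ang{a_2}\otimes 1=0$ holds for every $\ang{a_0,a_1,a_2}\in\widehat X^2$, and these triples are exactly the relators $\xi_i\xi_j\xi_k=1$ of the triangle presentation (\ref{rel}) of $\Gamma$. Hence the assignment $\xi\mapsto\ang\xi\otimes 1$ on the generating set $P$ respects all defining relations of $\Gamma$ and extends to a group homomorphism $\theta\colon\Gamma\to C(\Gamma)\otimes\bb R$ into an abelian group; equivalently, $\theta$ factors through $\Gamma/[\Gamma,\Gamma]$. Property (T) is then used only through its most basic consequence, that $\Gamma/[\Gamma,\Gamma]$ is finite, so $\theta$ has finite image; since $C(\Gamma)\otimes\bb R$ is torsion free, that image is zero, giving $\ang\xi\otimes 1=0$, and the same argument applied to $\xi\mapsto\ang{\ovl\xi}\otimes 1$ gives the other half of (\ref{relzb}). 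No cocycle, no spectral gap, and no dualization are needed at this stage.
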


\begin{proof}
By Lemma \ref{q2}, $\varepsilon$ has finite order in $C(\Gamma)$
and hence $\varepsilon\otimes 1$ is zero in $C(\Gamma)\otimes\bb R$.
Therefore, by (\ref{rel1b}),
$$
\ang{a_0}\otimes 1 + \ang{a_1}\otimes 1 + \ang{a_2}\otimes 1 = 0,
$$
for $\ang{a_0,a_1,a_2}\in \widehat X^2$. It follows from the presentation of $\Gamma$ that the map $\xi\mapsto \ang{\xi}\otimes 1$, $\xi \in P$, induces a
homomorphism $\theta$ from $\Gamma$ into the abelian group $C(\Gamma)\otimes\bb R$.

The $\widetilde A_2$ group $\Gamma$ has Kazhdan's property (T), by \cite[Theorem 5.7.7]{bhv}. It follows that the range of $\theta$ is finite \cite[Corollary 1.3.6]{bhv} and hence zero, since
$C(\Gamma)\otimes\bb R$ is torsion free.
Therefore $\ang{\xi}\otimes 1=0$, $\xi \in P$.
Similarly, $\ang{\overline \xi}\otimes 1=0$, $\xi \in P$.
This proves (\ref{relzb}).
The relation (\ref{rel1a}) then implies that
$\ang{a_1,a_2,a_0}\otimes 1=\ang{a_2,a_0,a_1}\otimes 1$ and the rest of (\ref{relza}) follows by symmetry.
\end{proof}

Let $C_0(\Gamma)$ be the abelian group with generating set $\widehat X^2$ and the following relations:
\begin{subequations}\label{rel0}
\begin{eqnarray}
&&\ang{a_0,a_1,a_2} = \ang{a_1,a_2,a_0}=\ang{a_2,a_0,a_1}, \qquad \ang{a_0,a_1,a_2}\in\widehat X^2 ; \label{rel0a}\\
&&\ang{\xi} = 0 = \ang{\ovl\xi}, \qquad \xi \in P. \label{rel0b}
\end{eqnarray}
\end{subequations}

\begin{lemma}\label{presentation0}
The relations (\ref{rel0}) imply the relations (\ref{rels}).
\end{lemma}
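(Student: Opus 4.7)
I plan to derive each of the two equations in (\ref{rels}) from the relations (\ref{rel0}) by two applications of the edge-sum vanishing (\ref{rel0b}), followed by cyclic invariance (\ref{rel0a}).

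For the $M$-equation $a=\sum_b m_{ab}\,b$, the defining conditions $b_2\notin\ovl{a_2}$ and $\ovl{b_1}=a_0\vee b_2$ parameterize the $q^2$ nonzero summands by pairs $(\ell,b_2)$, where $\ell$ is a line through $a_0$ distinct from $\ovl{a_2}$ and $b_2\in\ell\setminus\{a_0\}$; then $b_1=\ovl{\ell}$, and $b_0$ is determined by condition (iii) of the triangle presentation. Fix such an $\ell$. By (\ref{rel0b}), the sum of all triples $\ang{b_0,\ovl{\ell},b_2}\in\cl T$ (which run precisely over $b_2\in\ell$) vanishes in $C_0(\Gamma)$, and hence the partial sum over $b_2\in\ell\setminus\{a_0\}$ equals $-\ang{b_0^\ell,\ovl{\ell},a_0}$, where $b_0^\ell\in P$ is uniquely determined by condition (iii). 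Summing over $\ell\ne\ovl{a_2}$ and then completing the sum over all $q+1$ lines through $a_0$,
\begin{equation*}
\sum_b m_{ab}\,b \;=\; -\sum_{\ell\ni a_0}\ang{b_0^\ell,\ovl{\ell},a_0} \;+\; \ang{b_0^{\ovl{a_2}},a_2,a_0}.
\end{equation*}
The first sum on the right equals $\ang{a_0}=0$, since it enumerates all triples in $\cl T$ ending in $a_0$; the remaining term is $\ang{a_1,a_2,a_0}$, because cyclic invariance of $\cl T$ together with $(a_0,a_1,a_2)\in\cl T$ and uniqueness in condition (iii) force $b_0^{\ovl{a_2}}=a_1$. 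Relation (\ref{rel0a}) then identifies $\ang{a_1,a_2,a_0}$ with $a$.

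The $N$-equation $a=\sum_c n_{ac}\,c$ is established by an entirely parallel double-telescoping, now parameterizing the $q^2$ summands by pairs $(c_2,m)$ with $c_2\in\ovl{a_0}\setminus\{a_1\}$ and $m$ a line through $c_2$ distinct from $\ovl{a_0}$. The inner sum over $m\ni c_2$ collapses via $\ang{c_2}=0$, the outer sum over $c_2\in\ovl{a_0}$ via $\ang{\ovl{a_0}}=0$, and the surviving missing triple is $\ang{a_2,a_0,a_1}$, which equals $a$ by (\ref{rel0a}).

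The only subtlety is bookkeeping at each completion step: one must correctly identify the single ``missing'' triple that appears when a partial edge sum is extended to a full one. Each such triple has two of its coordinates prescribed among $\{a_0,a_1,a_2\}$, and condition (iii) together with $(a_0,a_1,a_2)\in\cl T$ forces it to be the appropriate cyclic permutation of $a$.
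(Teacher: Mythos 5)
Your proof is correct and is essentially the paper's argument run in reverse: the paper expands $a=\ang{a_1,a_2,a_0}$ by two nested applications of (\ref{rel0b}) (first the vanishing of the sum of triples ending in $a_0$, then the vanishing of the sums of triples with a fixed middle letter) to arrive at $\sum_b m_{ab}b$, while you collapse $\sum_b m_{ab}b$ using the same two families of edge-sum relations, in the opposite nesting order, to arrive at $\ang{a_1,a_2,a_0}=a$. Your identification of the single missing triple at each completion step via condition (iii) and cyclic invariance is exactly the bookkeeping the paper's Figure of the two stacked chambers encodes, so no comparison beyond notation is needed.
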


\begin{proof}
  Let $a=\ang{a_0,a_1,a_2}\in \widehat X^2$. Then, using the relations (\ref{rel0}), and referring to Figure \ref{xxx},
\begin{eqnarray*}
a
&=& \ang{a_0,a_1,a_2} = \ang{a_1,a_2,a_0}  \hbox {\qquad [using (\ref{rel0a})] } \\
&=& -\sum_{
\substack{\ang{c_2,b_1,a_0}\in\widehat X^2 \\ c_2\ne a_1}
}
\ang{c_2,b_1,a_0}
\hbox {\qquad [using (\ref{rel0b}), with $\xi=a_0$] }
\\
&=& -\sum_{
\substack{\ang{c_2,b_1,a_0}\in\widehat X^2 \\ c_2\ne a_1}
}
\left(
-\sum_{
\substack{\ang{b_0,b_1,b_2}\in\widehat X^2 \\ b_0\ne c_2}
}
\ang{b_0,b_1,b_2}  \right)
\hbox {\quad [using (\ref{rel0b}) again] } \\
&=& \sum_{b\in \widehat X^2}m_{ab}b.
\end{eqnarray*}

\refstepcounter{picture}
\begin{figure}[htbp]\label{xxx}
\hfil
\centerline{
\beginpicture
\setcoordinatesystem units <0.8cm, 1.4cm>
\setplotarea  x from -2 to 2,  y from -0.8 to 1
\put {$\bullet$} at 0.65 -0.15
\put {$\bullet$} at 0.7 0.1
\put {$\bullet$} at 1 0.25
\put {$_{a_1}$} [] at -0.8 -0.5
\put {$_{a_2}$} [] at 0.8 -0.5
\put {$_{b_1}$} [] at 0.35  0.5
\put {$_{b_2}$} [] at 1.8  0.5
\put {$_{a_0}$} [] at  0.0   -0.15
\put {$_{b_0}$} [] at  1.0   0.9
\put {$_{c_2}$} [] at -0.25  0.5
\arrow <10pt> [.2, .67] from  -0.7 0.3 to -0.5 0.5
\multiput {\beginpicture
\setcoordinatesystem units <0.8cm, 1.4cm>
\arrow <10pt> [.2, .67] from  0.2 1 to 0 1
\arrow <10pt> [.2, .67] from  -0.7 0.7 to -0.5 0.5
\arrow <10pt> [.2, .67] from  0.3 0.3 to 0.5 0.5
\putrule from -1 1 to 1 1
\setlinear \plot -1 1 0 0 1 1 /
\endpicture}  at   0 0  1 1  /
\plot -1 0  0 1 /
\endpicture
}
\hfil
\caption{}
\end{figure}

\noindent The proof of the relations
$a=\sum_{b\in \widehat X^2}n_{ab}b$ in (\ref{rels}) is similar.
\end{proof}

\begin{proposition}\label{C=C0}
If $\Gamma$ is a torsion free $\widetilde A_2$ group, then
  $C(\Gamma)\otimes\bb R = C_0(\Gamma)\otimes\bb R$.
\end{proposition}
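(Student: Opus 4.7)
The plan is to exploit the combination of Lemma \ref{presentation0} and Lemma \ref{33}, which together pinch $C(\Gamma)\otimes\bb R$ and $C_0(\Gamma)\otimes\bb R$ onto the same quotient of the free abelian group on $\widehat X^2$.

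First I would set $F$ equal to the free abelian group on $\widehat X^2$ and let $R, R_0 \subseteq F$ denote the subgroups generated by the relations (\ref{rels}) and (\ref{rel0}) respectively, so that $C(\Gamma) = F/R$ and $C_0(\Gamma) = F/R_0$. Lemma \ref{presentation0} says precisely that the elements of $F$ that define the relations (\ref{rels}) already lie in $R_0$; that is, $R \subseteq R_0$. Consequently the identity map on $\widehat X^2$ induces a canonical surjective homomorphism $\pi \colon C(\Gamma) \twoheadrightarrow C_0(\Gamma)$, whose kernel is the image $J := R_0/R$ of $R_0$ in $C(\Gamma)$.

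Next I would tensor with $\bb R$. Since $\bb R$ is flat over $\bb Z$, the induced map $\pi\otimes 1 \colon C(\Gamma)\otimes\bb R \twoheadrightarrow C_0(\Gamma)\otimes\bb R$ remains surjective, with kernel $J\otimes\bb R$. To finish the proof it therefore suffices to show $J\otimes\bb R = 0$, for which it is enough to check that each generator of $J$ vanishes in $C(\Gamma)\otimes\bb R$. But the generators of $J$ are exactly the expressions $\ang{a_0,a_1,a_2}-\ang{a_1,a_2,a_0}$, $\ang{a_1,a_2,a_0}-\ang{a_2,a_0,a_1}$, $\ang{\xi}$ and $\ang{\overline\xi}$ corresponding to the relations (\ref{rel0a}) and (\ref{rel0b}), and Lemma \ref{33} states precisely that all of these are zero in $C(\Gamma)\otimes\bb R$.

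Combining these two observations, $\pi\otimes 1$ is an isomorphism, and the canonical identification $C(\Gamma)\otimes\bb R = C_0(\Gamma)\otimes\bb R$ follows. There is no real obstacle in this proposition itself: the hard work has already been absorbed into Lemma \ref{33}, which is where Kazhdan's property (T) and the vanishing of $\varepsilon$ from Lemma \ref{q2} enter. The present statement is essentially a formal bookkeeping consequence of those two lemmas together with the elementary presentation comparison in Lemma \ref{presentation0}.
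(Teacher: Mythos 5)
Your proof is correct and follows essentially the same route as the paper: the paper's own argument is the two-line observation that, by Lemmas \ref{33} and \ref{presentation0}, the relations of each presentation imply those of the other after tensoring with $\bb R$, and your write-up is simply a more explicit version of this (making the surjection $C(\Gamma)\twoheadrightarrow C_0(\Gamma)$ and the flatness of $\bb R$ over $\bb Z$ visible). No gaps.
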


\begin{proof}
The groups have the same set of generators.
By Lemmas \ref{33} and  \ref{presentation0}, the relations in each group
imply the relations in the other. The groups are therefore equal.
\end{proof}

\section{Harmonic cochains and proof of the main result}

A harmonic 2-cochain \cite{eck} is a function
$c: \widehat X^2 \to \bb R$ satisfying the following conditions
for all $a\in\widehat X^2$ and for all $\xi\in P$.
\begin{subequations}\label{harmonic}
\begin{eqnarray}
c(\ang{a_0,a_1,a_2})&=&c(\ang{a_1,a_2,a_0})\,\,=\,\,c(\ang{a_2,a_0,a_1}); \label{harmonic(a)}\\
c(\ang {\xi})&=&c(\ang {\ovl\xi})\,\,=\,\,0.
\label{harmonic(b)}
\end{eqnarray}
\end{subequations}

Denote the set of harmonic 2-cochains by $C^2_{har}(\widehat X^2)$.
Since the group $\Gamma$ acts freely on $\Delta$, $C^2_{har}(\widehat X^2)$ may be identified with the space of $\Gamma$-invariant harmonic cochains $c:\widehat\Delta^2\to \bb R$, in the sense of \cite{ads}.
Now $C^2_{har}(\widehat X^2)$ is the algebraic dual of $C_0(\Gamma)\otimes\bb R$.
The next result is therefore an immediate consequence of
Proposition \ref{C=C0}.
\begin{proposition}\label{harm}
$C^2_{har}(\widehat X^2)$ is isomorphic to $C(\Gamma)\otimes\bb R$.
\end{proposition}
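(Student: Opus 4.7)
The proposition is essentially a tautology once Proposition \ref{C=C0} is in hand, so my plan is to unpack the duality carefully and invoke finite-dimensionality.

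First I would observe that a function $c:\widehat X^2\to\bb R$ is exactly the same thing as an $\bb R$-linear map from the free abelian group on $\widehat X^2$ (tensored with $\bb R$) to $\bb R$. The harmonic conditions (\ref{harmonic(a)}) and (\ref{harmonic(b)}) are manifestly the exact same relations that define $C_0(\Gamma)$, namely (\ref{rel0a}) and (\ref{rel0b}). Therefore the universal property of the quotient identifies $C^2_{har}(\widehat X^2)$ canonically with $\operatorname{Hom}_{\bb R}(C_0(\Gamma)\otimes\bb R,\bb R)$, the algebraic dual of $C_0(\Gamma)\otimes\bb R$.

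Next I would note that $\widehat X^2$ is a finite set (it is identified with the triangle presentation $\cl T$, which is finite because the projective plane of order $q$ is finite). Hence $C_0(\Gamma)\otimes\bb R$ is a finitely generated, and so finite-dimensional, $\bb R$-vector space, and the natural map from a finite-dimensional vector space to its double dual shows that it is (non-canonically) isomorphic to its dual.

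Combining these two observations gives $C^2_{har}(\widehat X^2)\cong C_0(\Gamma)\otimes\bb R$ as $\bb R$-vector spaces. Finally I would invoke Proposition \ref{C=C0}, which asserts that $C_0(\Gamma)\otimes\bb R=C(\Gamma)\otimes\bb R$, to conclude $C^2_{har}(\widehat X^2)\cong C(\Gamma)\otimes\bb R$. There is no real obstacle here; the substantive work has already been done in Lemmas \ref{33} and \ref{presentation0}, whose combined content is precisely Proposition \ref{C=C0}.
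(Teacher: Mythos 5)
Your proposal is correct and follows essentially the same route as the paper: the paper likewise observes that $C^2_{har}(\widehat X^2)$ is the algebraic dual of $C_0(\Gamma)\otimes\bb R$ and then cites Proposition \ref{C=C0}, leaving the finite-dimensional duality step implicit where you spell it out. No substantive difference.
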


The proof of Theorem \ref{main} can now be completed.
By Theorem \ref{Ki}, it is sufficient to show that the rank $r$ of
$C(\Gamma)$ is equal to $\beta_2=\dim_\bb R H^2(\Gamma, \bb R)$.
Garland's isomorphism \cite[Section 3.1]{ads} states that $H^2(\Gamma,\bb R)\cong C^2_{har}(\widehat X^2)$. Note that the account of Garland's Theorem in
\cite{ads} relates to the case where $\Gamma$ is a lattice in $\PGL (3,\bb K)$,
but the proof applies without change to all torsion free $\widetilde A_2$ groups.

 It follows from Proposition \ref{harm} that
$C(\Gamma)\otimes\bb R\cong H^2(\Gamma,\bb R)$.
Theorem \ref{Ki} now implies that
$K_0(\fk A_\Gamma)\otimes \bb R \cong \bb R^{2\beta_2}$.

It remains to identify $\beta_2$ explicitly.
The Euler characteristic of $\Gamma$ is $\chi(\Gamma)= \frac{1}{3}(q-1)(q^2-1)$ \cite[Section 4]{ro2001}.
Now $\chi(\Gamma)=\beta_0-\beta_1+\beta_2$ where $\beta_i=\dim_\bb R H_i(\Gamma, \bb R)$.
Since $\Gamma$ has Kazhdan's property (T),
the abelianisation $\Gamma/[\Gamma,\Gamma]$ is finite \cite[Corollary 1.3.6]{bhv},
and so $\beta_1=0$. Also $\beta_0=1$. Therefore
$\beta_2=\chi(\Gamma)-1=\frac{1}{3}(q-2)(q^2+q+1)$.
This completes the proof.
\qed

\end{document}